 \theoremstyle{plain}
  \newtheorem{lemma}{Лемма}
  \newtheorem{corollary}{Следствие}
  \newtoks\thehProclaim
 \theoremstyle{definition}
\theoremstyle{plain}
\newtoks\thehProclaim
\newtheorem*{Proclaim}{\the\thehProclaim}
\newenvironment{proclaim}[1]{\thehProclaim{#1}\begin{Proclaim}}{\end{Proclaim}}
\theoremstyle{definition}
\newtoks{\thehRemark}
\newtheorem*{Remark}{\the\thehRemark}
\begin{document}
  \begin{center}
    {\Large\bf О финитной отделимости конечно порожденных коммутативных колец}
 \end{center}
 \begin{center}\vspace{2mm}
     {\bf\small\copyright\ \ Станислав Кублановский}
 \end{center}\vspace{2mm}

    {\footnotesize   Установлены необходимые и достаточные условия финитной отделимости конечно порожденных коммутативных колец. Доказано, что каждое такое кольцо суть конечное расширение некоторого своего идеала кручения $I_{k}$ ($k$ число свободное от квадратов), который является подпрямым произведением конечного кольца и некоторого конечного набора колец без делителей нуля простых характеристик, являющихся целыми расширениями любого своего бесконечного моногенного подкольца.}

\address{ТПО « Северный Очаг»\\
Россия}

\email{stas1107@mail.ru}

\section{Введение}

Понятие финитной аппроксимируемости и финитной отделимости в алгебраических системах вызывает постоянный интерес исследователей. Одной из причин этого интереса является связь с алгоритмическими проблемами. На эту связь указал еще академик А.И. Мальцев в работе 1958 \cite{mal}. А именно: в финитно аппроксимируемых (финитно отделимых) конечно определенных системах разрешима проблема равенства (проблема вхождения). Все это относится и к основным алгебраическим системам: полугруппам, группам и кольцам.

Напомним, что алгебраическая система $A$ называется финитно отделимой, если для любого ее элемента $a$ и для любой подсистемы $ A^{'}$ такой, что $a\not\in A'$, существует конечная система $F$ и гомоморфизм $\varphi :A\rightarrow F$ такой, что $\varphi \left({a}\right)\not\in \varphi \left({A'}\right)$

Алгебраическая система $A$ называется финитно аппроксимируемой, если для любых ее двух различных элементов $a,b$  существует конечная система $F$ и гомоморфизм $\varphi :A\rightarrow F$ такой, что $\varphi \left({a}\right)\ne \varphi \left({b}\right)$.

Свойство финитной отделимости хорошо изучено в группах и полугруппах. Но для колец многие вопросы в этой тематике еще далеки от разрешения. Одним из таких вопросов был критерий финитной отделимости моногенных колец (напомним, что моногенным называют кольцо, порожденное одним элементом). Разрешение этого вопроса являлось бы отправной точкой для исследований. Оказалось, что в отличие от групп и полугрупп в кольцах дело обстоит значительно сложнее. Описание финитно отделимых моногенных колец получено автором в работе \cite{kyblv}. Данная работа является продолжением указанной работы. Целью исследования автора здесь является описание финитно отделимых конечно порожденных коммутативных колец. Основным результатом настоящей работы является следующая теорема.

\begin{proclaim}{Теорема} Для того, чтобы конечно порожденное коммутативное кольцо было финитно отделимо, необходимо и достаточно, чтобы оно было конечным расширением некоторого идеала кручения $I_{k}$ ($k$ --- число свободное от квадратов), который является  подпрямым произведением конечного кольца и некоторого конечного набора колец без делителей нуля простых характеристик, в которых любые два трансцендентные элемента целозависимы.
\end{proclaim}

\begin{proclaim}{Следствие}
Конечно порожденное коммутативное кольцо является финитно отделимым в том и только в том случае, когда каждое его двупорожденное подкольцо финитно отделимо.
\end{proclaim}

Из теоремы Гильберта о базисе следует, что каждое конечно порожденное коммутативное кольцо является конечно определенным, то есть может быть задано конечным набором определяющих соотношений. В настоящей работе приведен контрпример кольца простой характеристики без делителей нуля, показывающий, что из целой зависимости образующих кольца не следует, вообще говоря, целая зависимость произвольных двух трансцендентных элементов. Также в работе приведены примеры колец, в которых это имеет место. Поэтому представляет интерес описание условий финитной отделимости конечно порожденных коммутативных колец на языке образующих и определяющих соотношений.

В связи с особой ролью в этой проблематике двупорожденных колец простой характеристики автором доказано следующее

\begin{proclaim}{Предложение}
Двупорожденное кольцо простой характеристики $K=Z_{p}\left<{a,b\mid f\left({a,b}\right)=0}\right>$, где $f\left({x,y}\right)$ --- однородный многочлен от двух переменных, будет финитно отделимым тогда и только тогда, когда $f\left({x,y}\right)$ --- сепарабельный многочлен, то есть является произведением различных неприводимых многочленов.
\end{proclaim}

В §2 даны основные определения и обозначения, используемые в работе. В §3 приведены доказательства вспомогательных утверждений. В §4 изложено доказательство основных результатов настоящей работы. В §5 приведены пример и контрпример к естественным гипотезам, связанным с финитной отделимостью. В заключительном §6 сформулирован ряд открытых проблем по рассматриваемой тематике.

\section{Определения и обозначения}

Определение кольца и поля, идеала, подкольца, фактор-кольца и канонического гомоморфизма из кольца в фактор-кольцо, подпрямого произведения колец предполагаются известными. Рассматриваются кольца без требования существования 1, если не оговорено противное. Через $Z$ обозначается кольцо целых чисел. Если $p$ --- простое число, то через $Z_{p}$ обозначается кольцо остатков по модулю $p$ (оно, как известно, является простым полем). Каждое кольцо простой характеристики  $p$ рассматривается как алгебра над простым полем $Z_{p}$. Натуральное число $k$ называют свободным от квадратов, если оно не делится на квадрат простого числа. Легко видеть, что число свободное от квадратов это в точности произведение различных простых чисел, либо 1.

{\bf 1.} Через $Z_{p}[x]$ и $Z_{p}[x,y]$ обозначаются кольца многочленов с одной и двумя переменными с коэффициентами из простого поля $Z_{p}$.

{\bf 2.} Каждый многочлен $f\left({x,y}\right)$ из $Z_{p}[x,y]$ можно рассматривать как многочлен от одной переменой $x$ с коэффициентами из кольца $Z_{p}[y]$. Если старший коэффициент такого многочлена равен 1, то многочлен $f\left({x,y}\right)$ называется унитарным относительно $x$. Аналогично определяется многочлен унитарный относительно $y$. Многочлен $f\left({x,y}\right)$ называется унитарным, если он является унитарным по каждой переменной.

{\bf 3.}
  Однородный многочлен $f\left({x,y}\right)$ --- это многочлен, все одночлены которого имеют одинаковую полную степень.

{\bf 4.}
  Элемент $a$ кольца $K$ простой характеристики $p$ называется целым, если он в этом кольце является корнем некоторого ненулевого многочлена (без свободного члена) из $Z_{p}[x]$ (часто такие элементы называют алгебраическими над полем $Z_{p}$ ). Элемент, не являющийся целым, называется трансцендентным.

{\bf 5.}
   Элементы $a$ и $b$ кольца $K$ простой характеристики $p$ называются целозависимыми, если выполнено равенство $f\left({a,b}\right)=0$ для некоторого унитарного многочлена $f\left({x,y}\right)\in Z_{p}[x,y]$ без свободного члена.

{\bf 6.}
   Идеал $I$ кольца $K$ называется простым, если $I\ne K$ и для любых элементов $a,b$ кольца $K$ из $ab\in I$ следует, что $a\in I$ или $b\in I$. Легко видеть, что $I$ является простым идеалом тогда и только тогда, когда фактор-кольцо $K/I$ --- кольцо без делителей нуля.

{\bf 7.}
    Идеал $I$ коммутативного кольца $K$ называется примарным, если $I\ne K$ и для любых элементов $a,b$ кольца $K$ из $ab\in I$ следует, что $a\in I$ или $b^{n}\in I$ для некоторого натурального числа $n$.

{\bf 8.}
    Идеал $I$ кольца $K$ называется идеалом конечного индекса, если фактор кольцо $K/I$ конечно.

{\bf 9.}
    Кольцо $K$ называется конечным расширением своего идеала $I$, если аддитивная группа фактор-кольца $K/I$ является конечно порожденной.

{\bf 10.}
     Если фиксируется некоторое натуральное число $k$, то множество всех элементов $a$ кольца $K$, для которых выполнено равенство $k\cdot a=0$, образует идеал, который обозначают через $I_{k}$ и называют  идеалом $k$-кручения.

{\bf 11.}
    Мы будем использовать {\bf теорему Ласкера--Нетер}: {\it каждый идеал нётерова кольца можно записать в виде конечного пересечения примарных идеалов}. Доказательство этой теоремы можно найти в \cite{zarc}, §4.

{\bf 12.}
    Также будут использованы свойства полей и свойства алгебры многочленов над полем.Такие понятия как поле частных и алгебраическое замыкание поля \cite{byrbv}.

{\bf 13.}
 Будет использован результат А.И. Мальцева о замкнутости класса финитно отделимых колец относительно взятия конечных прямых произведений, подколец и гомоморфных образов \cite{mal}.

\section{Вспомогательные утверждения}
\begin{lemma}
 Если кольцо $K$ простой характеристики финитно отделимо  и $a^{2}=0$ для некоторого элемента $a\in K$, то для любого элемента $b\in K$ имеет место $a\cdot f\left({b}\right)=0$ для некоторого унитарного многочлена $f\left({x}\right)$ с целыми коэффициентами и без свободного члена.
\end{lemma}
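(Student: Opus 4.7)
The plan is to argue by contradiction. Suppose that $af(b)\neq 0$ for every monic $f\in Z_{p}[x]$ without constant term. Any non-trivial linear relation $\sum_{i\geq 1}c_{i}\,ab^{i}=0$ can be rescaled by the inverse of its highest non-zero coefficient to produce a monic relation, so this assumption is equivalent to the $Z_{p}$-linear independence of $\{ab,\,ab^{2},\,ab^{3},\dots\}$ in $K$. (As a by-product one checks that $b$ is then transcendental over $Z_{p}$ in $K$, since any polynomial relation in $b$, after multiplication by $a$, would contradict the linear independence of $\{a,ab,ab^{2},\dots\}$.)

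Having reduced to linear independence, I would form the $Z_{p}$-subspace
\[
L=\sum_{i\geq 2} Z_{p}\cdot ab^{i}\subseteq K.
\]
The identity $(ab^{i})(ab^{j})=a^{2}b^{i+j}=0$ shows that $L$ has trivial multiplication and is in particular closed under multiplication, so $L$ is a subring of $K$; the linear independence hypothesis immediately gives $ab\notin L$. By finite separability of $K$ there exists a homomorphism $\varphi\colon K\to F$ onto a finite ring such that $\varphi(ab)\notin\varphi(L)$.

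Since $F$ is finite, the sequence of powers $\varphi(b),\varphi(b)^{2},\varphi(b)^{3},\dots$ is eventually periodic, yielding integers $m>n\geq 1$ with $\varphi(b^{n})=\varphi(b^{m})$; multiplying by $\varphi(a)$ gives $\varphi(ab^{n})=\varphi(ab^{m})$. If the periodicity can be arranged to begin at $n=1$, i.e.\ so that $\varphi(b)^{1+T}=\varphi(b)$ for some $T\geq 1$, then $\varphi(ab)=\varphi(ab^{1+T})\in\varphi(L)$ since $1+T\geq 2$, contradicting the choice of $\varphi$ and completing the proof.

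The main technical obstacle is precisely forcing $n=1$. In the decomposition $F=\prod_{i}F_{i}$ of the finite commutative ring $F$ into a product of local rings, this requirement amounts to demanding that $\varphi(b)_{i}$ be either zero or a unit in every local component $F_{i}$, i.e.\ not a proper nilpotent. I would address this by refining $\varphi$: passing to a further quotient that eliminates the proper-nilpotent local components of $\varphi(b)$, while preserving the separation of $ab$ from $L$. Verifying that such a refinement exists is the crux of the argument; it relies on the commutativity of $K$ together with the structure of finite commutative rings of characteristic $p$ (as products of local Artinian $Z_{p}$-algebras) and is what makes the hypothesis of finite separability, rather than mere residual finiteness, essential here.
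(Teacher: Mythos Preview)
Your overall strategy---argue by contradiction, translate the hypothesis into $Z_{p}$-linear independence of the family $\{ab^{i}\}$, build a subring with zero multiplication, and invoke finite separability---matches the paper's in spirit. The gap is exactly where you flag it: you separate $ab$ from $L=\sum_{i\geq 2}Z_{p}\cdot ab^{i}$, and to reach a contradiction you need $\varphi(ab)=\varphi(ab^{m})$ for some $m\geq 2$, i.e.\ essentially $\varphi(b)=\varphi(b)^{m}$. In a finite ring one only gets $\varphi(b)^{n}=\varphi(b)^{n+T}$ for some $n\geq 1$; if $\varphi(b)$ has a nonzero nilpotent component in some local factor of $F$, the index $n$ is strictly greater than $1$ and your conclusion fails. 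Your proposed remedy---project away the offending local factors---is not justified: nothing guarantees that the separation $\varphi(ab)\notin\varphi(L)$ survives the projection, since the difference $\varphi(ab)-\varphi(\ell)$ for $\ell\in L$ may be supported precisely on the components you discard. Finite separability hands you \emph{one} separating homomorphism for the given pair $(ab,L)$; it does not let you upgrade that homomorphism after the fact.

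The paper sidesteps this obstacle by a different choice of element and subring. It separates $a$ itself from the subring $A$ spanned by the elements $a_{n}=a+a(b^{2n}-b^{n})$ (again a ring with zero multiplication, and $a\notin A$ by the same linear-independence reasoning). The point is that in \emph{any} finite ring every element has an idempotent power: there is always some $n$ with $\varphi(b)^{2n}=\varphi(b)^{n}$. For that $n$ one gets $\varphi(a_{n})=\varphi(a)$ directly, with no hypothesis on the index of $\varphi(b)$ and no need to refine $\varphi$. Replacing your pair $(ab,\,L)$ by the pair $(a,\,A)$ with $a_{n}=a+a(b^{2n}-b^{n})$ is the missing idea.
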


\begin{proof}
Предположим противное, то есть пусть для некоторых элементов $a$ и $b$ в кольце $K$ простой характеристики $p$ имеет место $a^{2}=0$ и $a\cdot f\left({b}\right)\ne 0$ для любого унитарного многочлена $f\left({x}\right)$ с целыми коэффициентами и без свободного члена.

Рассмотрим множество элементов $a_{n}=a+a\left({b^{2n}-b^{n}}\right)$ и подкольцо $A$ кольца $K$, порожденное этим семейством элементов $M=\{a_{n}\mid n=1,2,\dots\}$. Нетрудно заметить, что $A$ совпадает с аддитивной подгруппой, порожденной семейством элементов $M$, то есть $A=Za_{1}+Za_{2}+ \dots + Za_{n}+ \dots$ (поскольку произведение любых элементов из $M$ равно нулю). Если бы $a$ принадлежало $A$, то для некоторого натурального $n$ имело бы место $a=z_{1}a_{1}+\dots + z_{n}a_{n}$ для некоторых целых чисел $z_{1},z_{2},\dots , z_{n}$ (причем $z_{n}$ не делится на $p$). Тогда $ag\left({b}\right)=0$ для некоторого многочлена $g\left({x}\right)$ степени $2n$ со старшим коэффициентом $z_{n}$ и без свободного члена. В простом кольце вычетов $Z_{p}$ имеет место $z\cdot z_{n}=1\left({mod p}\right)$ для некоторого целого $z$. Тогда имеем $zag\left({b}\right)=0$

Пусть $f\left({x}\right)$ - многочлен, полученный из многочлена $z\cdot \left({g\left({x}\right)}\right)$ заменой старшего коэффициента $zz_{n}$ на $1$. Нетрудно видеть, что $a\cdot f\left({b}\right)=zag\left({b}\right)=0$ и $f\left({x}\right)$ ---унитарный многочлен с целыми коэффициентами и без свободного члена. Это противоречит предположению. Вывод: $a\not\in A$. Тогда в силу финитной отделимости кольца $K$ должен найтись гомоморфизм $\varphi :K\rightarrow F$ для некоторого конечного кольца $F$ такой, что $\varphi \left({a}\right)\not\in \varphi \left({A}\right)$. Но в конечном кольце для любого элемента некоторая его степень является идемпотентом, то есть $\varphi \left({b^{2n}}\right)=\varphi \left({b^{n}}\right)$ для некоторого натурального числа $n$. Откуда $\varphi \left({a_{n}}\right)=\varphi \left({a}\right)$, то есть $\varphi \left({a}\right)\in \varphi \left({A}\right)$. Полученное противоречие доказывает лемму 1.
\end{proof}

\begin{lemma}
 Каждый примарный идеал в финитно отделимом конечно порожденном коммутативном кольце простой характеристики является простым идеалом или идеалом конечного индекса.
\end{lemma}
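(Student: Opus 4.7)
Моя стратегия --- рассмотреть фактор-кольцо $\bar K = K/I$, в котором идеал $(0)$ примарный. По теореме Мальцева (п.~13) кольцо $\bar K$ остаётся финитно отделимым; оно также конечно порождено, коммутативно и простой характеристики $p$. Если $(0)$ в $\bar K$ является простым, то $I$ --- простой идеал в $K$, и утверждение доказано. В противном случае я покажу, что $\bar K$ конечно, что равносильно тому, что $I$ --- идеал конечного индекса.

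Если $(0)$ не простой, то в $\bar K$ найдутся $x, y \neq 0$ с $xy = 0$; примарность даст $y^n = 0$ при некотором минимальном $n \geq 2$, и для элемента $a = y^{n-1}$ будет $a \neq 0$ и $a^2 = 0$. Это позволит применить Лемму~1 к $\bar K$ и $a$: для каждого $b \in \bar K$ найдётся унитарный многочлен $f_b(x) \in \mathbb{Z}[x]$ без свободного члена с $a f_b(b) = 0$. Сочетая $a \neq 0$ с примарностью $(0)$, я заключу, что $f_b(b)$ нильпотентен, то есть принадлежит нильрадикалу $\bar N$ кольца $\bar K$. Тем самым образ $\bar b \in \bar K/\bar N$ будет целым над $\mathbb{Z}_p$ в смысле п.~4. Применив это к конечному набору порождающих $b_1, \dots, b_m$ кольца $\bar K$, я получу, что $\bar K/\bar N$ порождается как аддитивная группа конечным набором одночленов $\bar b_1^{e_1}\cdots\bar b_m^{e_m}$ с ограниченными $e_i$, а значит конечно.

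Наконец, по теореме Гильберта о базисе кольцо $\bar K$ нётерово, так что $\bar N$ конечно порождён как идеал и нильпотентен: $\bar N^k = 0$ для некоторого $k$. В фильтрации $\bar K \supset \bar N \supset \bar N^2 \supset \dots \supset \bar N^k = 0$ каждая фактор-группа $\bar N^i / \bar N^{i+1}$ является конечно порождённым модулем над конечным кольцом $\bar K/\bar N$, а значит конечна; отсюда и вытекает конечность $\bar K$.

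Главной технической трудностью плана мне представляется центральный шаг: из примарности $(0)$ и Леммы~1 вывести целую зависимость над $\mathbb{Z}_p$ образа каждого элемента по модулю нильрадикала. Именно это соединит финитную отделимость (через Лемму~1) с конечностью $\bar K/\bar N$; оставшиеся этапы --- стандартные применения теоремы Гильберта о базисе и элементарной теории модулей над конечными кольцами.
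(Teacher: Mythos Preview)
Your proof is correct and follows the same strategy as the paper: pass to $\bar K=K/I$, produce a nonzero element $a$ with $a^{2}=0$ from primarity, and apply Lemma~1 to each generator $b$ to obtain $a\cdot f_b(b)=0$. The only difference is that the paper avoids your nilradical filtration entirely: since primarity gives $f_b(b)^{m}=0$ for some $m$, each generator $b$ already satisfies the monic polynomial $f_b(x)^{m}$ \emph{in $\bar K$ itself}, so $\bar K$ is directly generated as an additive group by finitely many monomials and hence finite --- no passage through $\bar K/\bar N$ and no use of the filtration $\bar N^{i}/\bar N^{i+1}$ is needed.
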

\begin{proof}
Пусть $I$ примарный идеал в финитно отделимом конечно порожденном коммутативном кольце $K=Z\left<a_{1},a_{2}, \dots , a_{n}\right>$ простой характеристики $p$. Предположим, что $K/I$ --- 		 бесконечное кольцо. Покажем, что фактор кольцо $K/I$ --- кольцо без делителей нуля. Предположим противное: $cd=0$ для некоторых отличных от нуля элементов $c,d$ из кольца $K/I$.

По определению примарного идеала $d^{n}=0$ в $K/I$ для некоторого натурального $n$. Поскольку $c\ne 0$ и $d\ne 0$, то $n>1$. Выберем $n>1$ наименьшим со свойством $d^{n}=0$. Применим лемму 1, взяв в качестве  $b$ какой-либо $\overline{a}_{i}$ ($\overline{a}_{i}$ --- образ $a_{i}$ при каноническом гомоморфизме $K\rightarrow K/I$), а в качестве $a$ элемент $d^{n-1}$. Тогда $a^{2}=0$, откуда $a\cdot f\left({b}\right)=0$ для некоторого унитарного многочлена  $f\left({x}\right)$ ( без свободного члена). Заметим, что $a\ne 0$. Поэтому по определению примарного идеала заключаем,что $f\left({b}\right)^{m}=0$ для некоторого натурального числа  $m$. Заметим, что многочлен $f\left({x}\right)$ можно рассматривать как ненулевой многочлен из $Z_{p}[x]$. Поэтому $f\left({x}\right)^{m}$ --- ненулевой многочлен. Заключаем, что $\overline{a}_{i}$ --- целый элемент кольца $K/I$ по определению. Мы получили, что кольцо $K/I$ порождается конечным набором целых элементов. Нетрудно видеть, что его аддитивная группа конечно порождена. Конечно порожденная группа простой характеристики является конечным множеством, то есть $K/I$ --- конечное кольцо. Это противоречит предположению. Противоречие получено из предположения, что кольцо $K/I$ не является кольцом без делителей нуля. Лемма 2 доказана.
\end{proof}

\begin{lemma}
В финитно отделимом кольце простой характеристики без делителей нуля любые два трансцендентных элемента является целозависимыми.
\end{lemma}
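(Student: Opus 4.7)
От противного: пусть $a, b \in K$ --- трансцендентные элементы, не являющиеся целозависимыми. По теореме Мальцева (п.~13) подкольцо $K' = Z_p\langle a, b\rangle$ также финитно отделимо, поэтому без ограничения общности $K = Z_p[a, b]$, и $K \cong Z_p[x,y]/P$ для некоторого простого идеала $P$ (поскольку $K$ --- область целостности), причём $P \cap Z_p[x] = P \cap Z_p[y] = 0$ в силу трансцендентности $a, b$.

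Подражая построению из доказательства Леммы~1, введу семейство элементов $w_n = a + a(b^{2n} - b^n) = a(1 + b^{2n} - b^n)$ для $n = 1, 2, \ldots$ и обозначу через $A$ подкольцо в $K$, порождённое множеством $\{w_n : n \geq 1\}$. Цель --- показать, что $a \notin A$, но $\varphi(a) \in \varphi(A)$ для любого гомоморфизма $\varphi: K \to F$ в конечное кольцо $F$, что противоречит финитной отделимости $K$. Включение $\varphi(a) \in \varphi(A)$ проверяется так же, как в Лемме~1: в конечном кольце $F$ последовательность $\{\varphi(b)^k\}$ обязательно периодична, поэтому найдётся $n$ со свойством $\varphi(b)^{2n} = \varphi(b)^n$, и тогда $\varphi(w_n) = \varphi(a)\bigl(1 + \varphi(b)^{2n} - \varphi(b)^n\bigr) = \varphi(a)$.

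Главное препятствие --- установление $a \notin A$. В отличие от Леммы~1, где условие $a^2 = 0$ обрывало умножение и сводило $A$ к аддитивной группе, порождённой $w_n$-ами, здесь $A$ содержит все степени $a$ и обладает нетривиальной мультипликативной структурой. Если бы выполнялось $a = F(w_{n_1}, \ldots, w_{n_k})$ для некоторого многочлена $F$ без свободного члена, то, перенося это равенство в $Z_p[x, y]$, получили бы
\[
 x - F(\tilde w_{n_1}, \ldots, \tilde w_{n_k}) \in P, \qquad \tilde w_n = x(1 + y^{2n} - y^n).
\]
Анализ коэффициентов этого соотношения при различных степенях $x$: в случае алгебраической независимости ($P = 0$) приводит к несовместной системе линейных уравнений над $Z_p[y]$ (коэффициенты при $x^k$ для всех $k \geq 2$, а также при $x$ и при нижних степенях $x$ не могут одновременно обратиться в нуль при ненулевой сумме $\sum c_i = 1$); в случае алгебраической зависимости ($P = (h)$ для неприводимого $h$) именно тот факт, что старший коэффициент $h$ хотя бы по одной из переменных не является константой (что равносильно отсутствию унитарного многочлена в $P$ без свободного члена), препятствует появлению монома $x$ с коэффициентом $1$ после редукции по $(h)$. Эта техническая проверка, разбивающаяся на несколько подслучаев в зависимости от формы $h$ и возможных особенностей (например, вырожденного случая $h = xy - 1$, где дополнительно требуется подменить $A$ на $Z_p[b]$ и воспользоваться тем, что в каждом конечном образе $\varphi(a) = \varphi(b)^{-1} \in Z_p[\varphi(b)]$), и составляет содержательную сердцевину доказательства.
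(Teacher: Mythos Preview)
Your proposal has a genuine gap at exactly the point you yourself flag as ``the substantive core''. In Lemma~1 the nilpotency $a^{2}=0$ collapses the subring $A$ to the additive span of the $w_{n}$, which is what makes the verification $a\notin A$ tractable. Here you have no nilpotency, so $A$ is an honest subring with full multiplicative structure, and the claim $a\notin A$ is essentially as hard as the lemma itself. Your sketch for the case $P=(h)$ asserts that the non-constancy of the leading coefficient of $h$ ``prevents the monomial $x$ with coefficient~$1$ from appearing after reduction by $(h)$'', but this is neither proved nor evident: once you allow arbitrary polynomial combinations of the $w_{n}$ (not just linear ones), the elements of $A$ are arbitrary polynomials in $a$ with coefficients of a special shape in $Z_{p}[b]$, and whether $a$ itself lies among them is a delicate divisibility question in $Z_{p}[x,y]/(h)$ that you have not settled. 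The ad hoc patch for $h=xy-1$ already signals that the case analysis is not under control (and in any case that particular $h$ cannot occur here, since the rings are without unit and $b$ is assumed transcendental).

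The paper's proof follows an entirely different strategy. It does not argue by contradiction from the absence of a unitary relation; instead it \emph{starts} from an algebraic relation $f_{0}(b)a^{n}+\cdots+f_{n-1}(b)a=0$ with $f_{0}\ne 0$, supplied by a lemma of the companion paper~\cite{kyblv}, and then uses finite separability to force the (reduced) leading coefficient $k_{0}$ to equal~$1$. The separating subring there is not your $A$ but rather the $Z_{p}[b]$-module generated by $k_{0}a,\,k_{0}^{2}a^{2},\dots,k_{0}^{n-1}a^{n-1}$, and the argument is an intricate induction on an ideal $I(L^{q})$ that eventually produces an element of a finite test set $M^{*}$ inside the image of this subring, contradicting the choice of separating homomorphism. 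None of this machinery appears in your outline, and I do not see how your $w_{n}$-construction could substitute for it.
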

\begin{proof}
Пусть $K$ --- финитно отделимое кольцо простой характеристики  $p$ без делителей нуля. Пусть $a,b$ два произвольных трансцендентных элемента кольца $K$

По лемме 2 из работы \cite{kyblv} имеет место равенство
\begin{equation}
f_{0}\left({b}\right)a^{n}+f_{1}\left({b}\right)a^{n-1}+\dots + f_{n-1}\left({b}\right)a+f_{n-1}\left({b}\right)a=0 \tag{$*$}
\end{equation}
для некоторого натурального $n$ и некоторых многочленов $f_{i}\left({x}\right)\in Z_{p}[x]$ без свободных членов ($i=0, \dots ,n-1$), причем $f_{0}\left({x}\right)\in Z_{p}[x]$ --- ненулевой многочлен. Без ограничения общности считаем, что $n$ выбрано наименьшим из возможных. Это число $n$ обозначим через $D_{b}\left|{a}\right|$ (то есть через $D_{b}\left|{a}\right|$ обозначим наименьшую из возможных степеней ненулевых многочленов $\sigma _{b}\left({x}\right)$ (без свободного члена) с коэффициентами из кольца $Z\left<{b}\right>$, для которых $\sigma _{b}\left({a}\right)=0$ в кольце $K$. Такое число называют алгебраической степенью элемента $a$ относительно $b$ в кольце $K$.

Пусть $d\left({x}\right)=\gcd \left({f_{0}\left({x}\right),f_{1}\left({x}\right), \dots ,f_{n}\left({x}\right)}\right)$ и $k_{i}=f_{i}\left({x}\right):d\left({x}\right)$ для $i=0,1,n-1$. Ясно, что $k_{i}\in Z_{p}[x]$. Тогда по определению наибольшего общего делителя имеем $\gcd\left({k_{0},k_{1},\dots,k_{n-1}}\right)=1$. Покажем, что $k_{0}=1$. Предположим противное, то есть $k_{0}\ne 1$.

Пусть $g\left({x}\right)=xd\left({x}\right)$. Тогда $g\left({x}\right)$ --- многочлен без свободного члена. Кольцо $K$ можно считать модулем над кольцом многочленов $Z_{p}[x,y]$ с операцией $k\left({x,y}\right)\cdot c=k^{* }\left({b,a}\right)c+qc$, где $k\left({x,y}\right)$ --- произвольный многочлен из $Z_{p}[x,y]$, $c$ --- произвольный элемент кольца $K$, $q$ -свободный член многочлена $k\left({x,y}\right)$, а $k^{* }\left({x,y}\right)=k\left({x,y}\right)-q$.

Заметим, что из равенства (*) путем умножения на $b$ следует равенство:
\begin{equation*}
     g\left({b}\right)\cdot \left({k_{0}\cdot a^{n}+k_{1}\cdot a^{n-1}+\dots+k_{n-1}\cdot a}\right)=0.
\end{equation*}
Поскольку $b$ --- трансцендентный элемент и кольцо $K$ без делителей нуля, то из последнего равенства следуют равенства:
\begin{equation*}
    k_{0}\cdot a^{n}+k_{1}\cdot a^{n-1}+\dots+k_{n-1}\cdot a=0
\end{equation*}
    и
\begin{equation}
        D_{b}\left|{a}\right|=n.  \tag{**}
\end{equation}

Тогда для некоторого натурального $m$, не превосходящего $n-1$, многочлен $k_{m}$ не делится на некоторый неприводимый многочлен --- делитель многочлена $k_{0}$ в кольце многочленов $Z_{p}[x]$ (в противном случае нод многочленов $k_{i}$ был бы отличен от 1). Далее будем считать, что $m$ выбрано наименьшим из возможных с указанным свойством. Перепишем равенство (*) в двух видах:
\begin{gather}
     k_{0}\cdot a^{n}+\dots+k_{m-1}\cdot a^{n-m+1}=-k_{m}\cdot a^{n-m}-\dots-k_{n-1}\cdot a     \tag{1}\\
     k_{0}\cdot a^{n}=-k_{1}\cdot a^{n-1}-\dots-k_{n-1}\cdot a            \tag{2}
\end{gather}
Обозначим через $f\left({x,y}\right)$ и $\varphi \left({x,y}\right)$ следующие многочлены из  $Z_{p}[x,y]$:
\begin{gather*}
 f\left({x,y}\right)= k_{0}y^{n}+k_{1}y^{n-1}+ \dots + k_{m-1}y^{n-m+1},\\
 \varphi \left({x,y}\right)=-k_{m}y^{n-m} - \dots -k_{n-1}y.
\end{gather*}

Обозначим через $A$ подмножество кольца $K$ определяемое равенством:
\begin{equation*}
   A=Z_{p}[x]\cdot \left\{{k_{0}\cdot a}\right\}+Z_{p}[x]\cdot \left\{{k^{2}_{0}\cdot a^{2}}\right\}+\dots + Z_{p}[x]\cdot \left\{{k^{n-1}_{0}\cdot a^{n-1}}\right\}.
\end{equation*}
Ясно, что $A$ замкнуто по сложению (вычитанию), то есть $A$ --- аддитивная подгруппа кольца $K$. Из равенства $\left({2}\right)$ следует замкнутость $A$ по умножению. Чтобы в этом убедиться достаточно проверить, что \begin{equation*}
    \left({k_{0}\cdot a}\right)\left({k^{n-1}_{0}a^{n-1}}\right)\in A.
\end{equation*}  
Действительно,
\begin{equation*}
    \left({k_{0}\cdot a}\right)\cdot \left({k^{n-1}_{0}\cdot a^{n-1}}\right)=\left({k^{n-1}_{0}k_{0}}\right)\cdot a^{n}= k^{n-1}_{0}\cdot \left({-k_{1}\cdot a^{n-1}- \dots -k_{n-1}\cdot a}\right)\in A.
\end{equation*} 
Следовательно, $A$ --- подкольцо кольца $K$. Заметим, что $A$ содержит все элементы вида $k^{s}_{0}\cdot a^{s}$ для любого натурального $s$. Из определений вытекает, что $A$ --- подмодуль модуля $K$ над кольцом $Z_{p}[x]$. 

Обозначим через $M$ подмножество кольца $K$, определяемое следующим образом:
\begin{equation*}
     M=\{w=z_{n-1}\cdot a^{n-1}+ \dots + z_{i}\cdot a^{i} + \dots + z_{1}\cdot a \mid z_{i}\in Z_{p}[x], deg z_{i}<deg k^{i}_{0}\}
\end{equation*}
Отметим, что $M$ --- конечное множество. Пусть $M^{* }$ --- множество ненулевых элементов из $M$. Покажем, что $M^{* }\cap A=\left\{{0}\right\}$. Действительно, в противном случае получилось бы  $D_{b}\left|{a}\right|<n$, что противоречит $\left({**}\right)$. По условию кольцо $K$ финитно отделимо. Это означает, что существуют конечное кольцо $\Omega $ и гомоморфизм $\chi :K\rightarrow \Omega $ такой, что $\chi \left({M^{* }}\right)\cap \chi \left({A}\right)=\emptyset $.
Тогда в конечном кольце существуют натуральные числа $t$ и $h$ такие, что $\chi \left({k^{t}_{0}\cdot a}\right)=\chi \left({k^{t+h}_{0}\cdot a}\right)$. Из последнего равенства следует 
\begin{equation*}
    \chi \left({k^{t}_{0}\cdot a}\right)=\chi \left({k^{t+sh}_{0}\cdot a}\right)
\end{equation*} 
для любого натурального числа $s$, откуда 
\begin{equation*}
    \chi \left({k^{t}_{0}\cdot a^{s}}\right)=\chi \left({k^{t+sh}_{0}\cdot a^{s}}\right)=\chi \left({k^{t+sh-s}_{0}\cdot k_{0}a}\right)\chi \left({k^{s-1}_{0}\cdot a^{s-1}}\right).
\end{equation*}

Как отмечено выше, $k^{s}_{0}a^{s}\in A$ для всех $s$. Поэтому из последнего равенства вытекает 
\begin{equation}
    \chi \left({k^{t}_{0}a^{s}}\right)\in \chi \left({A}\right) \ \text{для всех } s\in \mathbb{N}
    \tag{3}
\end{equation}
Пусть $L=\{k_{0},k_{1},\dots,k_{m-1}\}$. Пусть $q$ --- некоторое натуральное число. Обозначим через $I\left({L^{q}}\right)$ идеал кольца $Z_{p}[x]$, порожденный множеством $L^{q}=L\cdot L\cdot L\cdots L$ ($q$ раз).

Для дальнейших рассуждений докажем следующую импликацию:\\
Если для некоторых $c_{i }\in I\left({L^{q}}\right)\left({i>n-m}\right)$ и для некоторого неотрицательно целого числа $z$ имеет место равенство \begin{equation*}
      \sum\limits_{i>n-m}{c_{i}\cdot a^{i}}=k^{z}_{m}\cdot \varphi \left({b,a}\right)+r\left({b,a}\right)
  \end{equation*} для некоторого многочлена $r\left({x,y}\right)$ без свободного члена из кольца $Z_{p}[x,y]$ степени меньшей $n-m$ относительно $y$, то имеет место аналогичное равенство
\begin{equation*}
     \sum\limits_{i>n-m}{c'_{i}\cdot a^{i}}=k^{z'}_{m}\cdot \varphi \left({b,a}\right)+r'\left({b,a}\right)
\end{equation*}
 для некоторых $c'_{i }\in I\left({L^{q+1}}\right)\left({i>n-m}\right)$ и для некоторого натурального числа $z'$ и некоторого многочлена  $r'\left({x,y}\right)$ из кольца $Z_{p}[x,y]$ степени меньшей $n-m$ относительно переменной $y$ без свободного члена.
 
Для доказательства этой импликации заметим следующее:\\
а) Для некоторого достаточно большого натурального числа $l$ в кольце многочленов  $Z_{p}[x,y]$ возможно деление с остатком, а именно:
\begin{equation*}
    k^{l}_{m}y^{i}=U_{i}\left({x,y}\right)\varphi \left({x,y}\right)+R_{i}\left({x,y}\right),  deg_{y}R_{i}\left({x,y}\right)<deg_{y}\varphi \left({x,y}\right)=n-m
\end{equation*}
для всех $i$ из промежутка $ n\ge i>n-m$, для которых $c_{i}\ne 0$. Заметим, что все многочлены $R_{i}\left({x,y}\right)$ без свободного члена.

б) Тогда из условия рассматриваемой импликации следует путем умножения на $k^{l}_{m}$:
\begin{equation*}
     \sum\limits_{i>n-m}{c_{i}k^{l}_{m}\cdot a^{i}}=k^{z+l}_{m}\cdot \varphi \left({b,a}\right)+k^{l}_{m}\cdot r\left({b,a}\right),
\end{equation*}
 откуда на основании (а) получаем
\begin{equation*}
     \sum\limits_{i>n-m}{c_{i}\cdot \left({U_{i}\left({x,y}\right)\cdot \varphi \left({b,a}\right)+R_{i}\left({b,a}\right)}\right)}=k^{z+l}_{m}\cdot \varphi \left({b,a}\right)+k^{l}_{m}\cdot r\left({b,a}\right).
\end{equation*}

в) Из равенства $\left({1}\right)$ следует $\varphi \left({b,a}\right)=f\left({b,a}\right)$. Поэтому заменяя в предыдущем равенстве $\varphi \left({b,a}\right)$ на $f\left({b,a}\right)$, получаем
\begin{equation*}
    \sum\limits_{i>n-m}{c_{i}\left({U_{i}\left({x,y}\right)f\left({b,a}\right)+R_{i}\left({b,a}\right)}\right)}=k^{z+l}_{m}\varphi \left({b,a}\right)+k^{l}_{m}r\left({b,a}\right).
\end{equation*} 
Коэффициенты многочлена $f\left({x,y}\right)$ относительно переменной $y$ принадлежат множеству $L$ по определению. Поэтому коэффициенты многочлена $c_{i}\left({U_{i}\left({x,y}\right)f\left({x,y}\right)}\right)$ относительно переменной $y$ принадлежат множеству $Z\left({L^{q+1}}\right)$, поскольку по условию все $c_{i }\in I\left({L^{q}}\right)$.	Чтобы доказать рассматриваемую импликацию, осталось в левой части последнего равенства раскрыть скобки и перенести все $c_{i}R_{i}\left({b,a}\right)$ в правую часть. Итак, импликация доказана.

Из этой импликации методом математической индукции приходим к выводу: для любого натурального числа $s$, и для некоторых целых чисел $c_{i }\in I\left({L^{s}}\right)\left({i>n-m}\right)$, и для некоторого целого неотрицательного числа $z$ имеет место равенство
\begin{equation*}
   \sum\limits_{i>n-m}{c_{i}a^{i}}=k^{z}_{m}\varphi \left({b,a}\right)+r\left({b,a}\right)
\end{equation*} 
для некоторого многочлена с $r\left({x,y}\right)$ из $Z_{p}[x,y]$ степени меньшей $n-m$ и без свободного члена. Базой индукции является равенство $\left({1}\right)$.

Заметим, что 
\begin{equation*}
    k^{z}_{m}\varphi \left({x,y}\right)+r\left({x,y}\right)=q_{1}y+q_{2}y^{2}+\dots+q_{n-m}y^{n-m}
\end{equation*} 
для некоторых многочленов $q_{j}$ ($j=1,\dots,n-m$) из кольца $Z_{p}[x]$, причем $q_{n-m}=-k^{z+1}_{m}$.
Поделим каждое $q_{j}$ с остатком на $k^{j}_{0}$ в кольце многочленов $Z_{p}[x]$ и получим равенства:
\begin{center}
    $q_{j}=p_{j}k^{j}_{0}+q^{* }_{j}$, где $0\le q^{* }_{j}<k^{j}_{0}$ ($j=1\dots,n-m$).
\end{center} 
Заметим, что $q^{* }_{n-m}\ne 0$, поскольку $k^{z+1}_{m}$ не делится на $k_{0}$. Имеем 
\begin{equation*}
    k^{z}_{m}\cdot \varphi \left({b,a}\right)+r\left({b,a}\right)=\sum\limits_{j=1}^{n-m}{\left({p_{j}k^{j}_{0}+q^{* }_{j}}\right)\cdot a}^{j}=\sum\limits_{j=1}^{n-m}{p_{j}k^{j}_{0}a}^{j}+\sum\limits_{j=1}^{n-m}{q^{* }_{j}a}^{j}.
\end{equation*} 
Получаем 
\begin{equation*}
    \chi \left({\sum\limits_{i>n-m}{c_{i}a^{i}}}\right)=\chi \left({k^{z}_{m}\varphi \left({b,a}\right)+r\left({b,a}\right)}\right).
    \end{equation*}
 Из предыдущего равенства получаем:
    \begin{equation}
 \chi \left({\sum\limits_{j=1}^{n-m}{p_{j}k^{j}_{0}\cdot a}^{j}}\right)+\chi \left({\sum\limits_{j=1}^{n-m}{q^{* }_{j}\cdot a}^{j}}\right). \tag{4}
\end{equation}

Для достаточно большого числа $s$ любой элемент $c_{i}$ из идеала $I\left({L^{s}}\right)$ делится на $k^{t}_{0}$. Поэтому на основании $\left({3}\right)$ заключаем $\chi \left({\sum\limits_{i>n-m}{c_{i}a^{i}}}\right)\in \chi \left({A}\right)$.

На основании определения множества $A$ имеем $\left({\sum\limits_{j=1}^{n-m}{p_{j}k^{j}_{0}a}^{j}}\right)\in A$. Поэтому из равенства $\left({4}\right)$ получаем $\chi \left({\sum\limits_{j=1}^{n-m}{q^{* }_{j}a}^{j}}\right)\in \chi \left({A}\right)$. На основании определения множества $M$ имеем $\sum\limits_{j=1}^{n-m}{q^{* }_{j}a}^{j}\in M$. Поскольку $\chi \left({M^{* }}\right)\cap \chi \left({A}\right)=\emptyset $, то заключаем $\sum\limits_{j=1}^{n-m}{q^{* }_{j}a}^{j}=0$. Но тогда получилось бы  $D_{b}\left|{a}\right|\le n-m<n$, что противоречит выбору $n$.
Противоречие получилось из предположения $k_{0}\ne 1$. Это означает, что $k_{0}=1$.

Итак, мы доказали равенство $a^{n}+k_{1}\cdot a^{n-1}+ \dots + k_{n-1}\cdot a=0$. Аналогично доказывается равенство $b^{n'}+l_{1}\cdot b^{n'-1}+ \dots + l_{n'-1}\cdot b=0$ для некоторого натурального $n'$ и некоторых многочленов $l_{1},l_{2},\dots, l_{n'-1}$ из $Z_{p}[y]$. Без ограничения общности, можно считать, что $n$ больше максимальной из степеней многочленов $l_{1},l_{2},\dots,l_{n'-1}$, а $n'$ больше максимальной из степеней многочленов $k_{1},k_{2},\dots,k_{n}$. Тогда 
\begin{equation*}
    \varphi \left({x,y}\right)=y^{n}+k_{1}y^{n-1}+\dots + k_{n-1}y+x^{n'}+l_{1}x^{n'-1}+ \dots + l_{n'-1}x
\end{equation*} 
является унитарным многочленом и $\varphi \left({b,a}\right)=0$. Вывод: элементы $a$ и $b$ целозависимы. Лемма 3 доказана.
\end{proof}

\begin{lemma}
Если конечно порожденное коммутативное кольцо $K$ финитно отделимо, то оно является конечным расширением некоторого своего идеала кручения $I_{k}$, который является прямым произведением конечного набора колец различных простых характеристик.
\end{lemma}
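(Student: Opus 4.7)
The plan is to use Noetherianness (from Hilbert's basis theorem), Maltsev's closure of finite separability under subrings and homomorphic images (point 13), the Lasker--Noether decomposition (point 11), and Lemma 2.

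\textbf{Bounding the additive torsion.} The additive torsion $T=\{a\in K\mid ma=0\text{ for some }m\geq 1\}$ is an ideal of $K$, because $m(ra)=r(ma)=0$ for $r\in K$. Noetherianness gives that $T$ is finitely generated as a $K$-ideal: $T=Kt_1+\dots+Kt_s$. Letting $n_i$ be the additive order of $t_i$ and $n=\operatorname{lcm}(n_1,\dots,n_s)$, every element $a=\sum r_i t_i\in T$ satisfies $na=\sum r_i(nt_i)=0$, so $T=I_n$.

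\textbf{Primary decomposition and case analysis.} By Lasker--Noether, $(0)=Q_1\cap\dots\cap Q_r$ with each $Q_j$ primary, giving an embedding $K\hookrightarrow\prod_j K/Q_j$. Each $K/Q_j$ is finitely separable (Maltsev). Set $P_j=\sqrt{Q_j}$; the quotient $K/P_j$ is a domain. When $\mathrm{char}\,K/P_j=p_j$ is prime, the zero ideal in $K/Q_j$ is primary, so Lemma 2 forces $K/Q_j$ to be either a domain of characteristic $p_j$ or a finite ring. When $\mathrm{char}\,K/P_j=0$, one argues that $K/Q_j$ is a finitely generated $Z$-module: every element must be integral over $Z$, for otherwise it would generate a subring isomorphic to $Z[x]$, contradicting finite separability by a construction parallel to Lemma 1 adapted to characteristic zero. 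Partition the indices accordingly: let $T$ denote those $j$ for which $K/Q_j$ is an infinite domain of prime characteristic $p_j$, and $S$ the rest.

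\textbf{Assembling $I_k$ and the conclusion.} Let $k=\prod_{j\in T}p_j$ --- a square-free integer. For the direct-product structure of $I_k$: writing $k=p_1\cdots p_l$ with distinct primes, Bezout yields $I_{p_i}\cdot I_{p_j}=0$ for $i\neq j$ (if $a\in I_{p_i}$, $b\in I_{p_j}$, and $up_i+vp_j=1$, then $ab=(up_i+vp_j)ab=0$), while the Chinese Remainder Theorem provides the additive decomposition $I_k=I_{p_1}\oplus\dots\oplus I_{p_l}$; vanishing of cross-products promotes this to a ring isomorphism $I_k\cong I_{p_1}\times\dots\times I_{p_l}$, with each factor of characteristic $p_i$. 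For the finite-extension claim: the kernel of the composite $K\hookrightarrow\prod_j K/Q_j\twoheadrightarrow\prod_{j\in S}K/Q_j$ is $\bigcap_{j\in S}Q_j$, and $\bigcap_{j\in S}Q_j\subseteq I_k$. Indeed, for $a\in\bigcap_{j\in S}Q_j$, in each $K/Q_j$ with $j\in S$ we have $ka\in Q_j$ already, while for $j\in T$ the characteristic of $K/Q_j$ is $p_j$, a factor of $k$, so $ka\in Q_j$ as well; hence $ka\in\bigcap_j Q_j=(0)$. Thus $K/I_k$ is a quotient of the image of $K$ in $\prod_{j\in S}K/Q_j$, a subgroup of a finite product of finitely generated abelian groups --- hence itself finitely generated as an abelian group.

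The main obstacle is the characteristic-zero case in the second paragraph --- establishing that $Z[x]$ is not finitely separable, the characteristic-zero analogue of what Lemma 1 provides in prime characteristic. This should follow from a construction of the same flavor as Lemma 1, producing a pair (element, subring) in $Z[x]$ that no finite quotient can separate. The remaining steps --- the direct-product structure of $I_k$ and the finite-extension conclusion --- then follow routinely from Bezout and the Chinese Remainder Theorem.
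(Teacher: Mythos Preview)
Your route through Lasker--Noether and Lemma~2 differs from the paper's. The paper does not decompose $(0)$ at all here; it invokes Lemmas~3 and~4 of \cite{kyblv} directly: every element of a finitely separable ring has square-free integer torsion, so each generator $a_i$ satisfies $k_i f_i(a_i)=0$ for some square-free integer $k_i$ and some monic $f_i\in Z[x]$ without constant term. With $k=\operatorname{lcm}(k_i)$ one then has $f_i(\overline{a}_i)=0$ in $K/I_k$, so every generator of $K/I_k$ is integral over $Z$ and the additive group of $K/I_k$ is finitely generated in one stroke. The splitting $I_k\cong\prod I_{p_i}$ is obtained by the same Bezout argument you give.

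Your proposal has two genuine gaps. The acknowledged one --- showing that $K/Q_j$ is a finitely generated $Z$-module when $\operatorname{char}(K/P_j)=0$ --- is not a routine variant of Lemma~1; it is exactly the content imported from \cite{kyblv}, and without it you cannot conclude that the factors indexed by $S$ contribute a finitely generated additive group. There is also an unacknowledged gap: Lemma~2 is stated only for rings of \emph{prime} characteristic, yet you apply it to $K/Q_j$ knowing only that $K/P_j$ has prime characteristic $p_j$. A priori $K/Q_j$ may have characteristic $p_j^{\,m}$ with $m>1$ (the primary ideal $Q_j$ is only guaranteed to contain a power of $P_j$, not $p_jK$ itself), and then Lemma~2 does not apply as stated. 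Ruling this out again comes down to the square-free torsion result from \cite{kyblv}. Both gaps thus reduce to the external input that the paper simply cites up front, and the paper's direct route through \cite{kyblv} is both shorter and complete.
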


\begin{proof}
 Пусть $K$ --- конечно порожденное коммутативное финитно отделимое кольцо. Пусть $\{a_{1},\dots,a_{n}\}$ --- порождающее множество кольца $K$. По леммам 3 и  4 из работы \cite{kyblv} каждый элемент финитно отделимого кольца имеет конечное целое кручение свободное от квадратов. Отсюда следует $k_{i}\cdot f_{i}\left({a_{i}}\right)=0$ для некоторого набора унитарных многочленов $f_{i}\left({x}\right)$ с целыми коэффициентами без свободных членов и некоторого набора натуральных чисел $k_{i}$ свободных от квадратов ($i=1, \dots , n$). 
 
 Пусть 
 \begin{center}
     $k=$ Н.О.К $\left({k_{1}, \dots ,k_{n}}\right)$, $I_{k}=\{b\in K |kb=0\}$.
 \end{center} В фактор-кольце $K/I_{k}$ имеет место $f_{i}\left({\overline{a}_{i}}\right)=0$, где $\overline{a}_{i}$ --- образ элемента $a_{i}$ при каноническом гомоморфизме $K\rightarrow K/I _{k} $($i=1,2, \dots ,n$). Это означает, что все $\overline{a}_{i}$ являются целыми алгебраическими элементами в кольце $K/I _{k}$. Отсюда следует, что аддитивная группа кольца $K/I _{k}$ конечно порождена. Поскольку все числа $k_{1}, \dots ,k_{n}$ --- свободны от квадратов, то и  $k=$ Н.О.К $ \left({k_{1},\dots,k_{n}}\right)$ свободно от квадратов.

Пусть $k=p_{1}\cdot p_{2}\cdot\dots\cdot p_{n}$ --- разложение на простые множители.
Из определений вытекает: 
\begin{center}
    Н.О.Д $\left({k/p_{1},\dots, k/p_{n}}\right)=1$.
\end{center} 
По теореме Евклида о линейном выражении наибольшего общего делителя существует семейство целых чисел $z_{i}$ ($i=1, \dots ,n$) такое, что
\begin{equation}
  z_{1}\left({k/p_{1}}\right)+\dots+z_{n}\left({k/p_{n}}\right)=1.  \tag{5}
\end{equation}
Рассмотрим семейство идеалов $\left({k/p_{i}}\right)\cdot I_{k}$ кольца $I_{k}$  Из равенства (5) следует:
\begin{center}
    $I_{k}=$ $z_{1}\left({k/p_{1}}\right)I_{k}+\dots+z_{n}\left({k/p_{n}}\right)I_{k}$.
\end{center}

Каждое кольцо $\left({k/p_{i}}\right)\cdot I_{k}$ --- это кольцо простой характеристики $p_{i}$. Поэтому сумма идеалов различных простых характеристик является прямой суммой. Осталось заметить, что прямая сумма конечного набора идеалов изоморфна их прямому произведению. Лемма 4 доказана.
\end{proof}

\begin{corollary}
 Утверждения леммы 4 остаются справедливыми для конечно порожденных PI-колец благодаря теореме Ширшова о высоте \cite{shir}.
\end{corollary}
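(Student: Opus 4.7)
The plan is to replay the proof of Lemma 4 almost verbatim, with Shirshov's height theorem replacing the one argument that genuinely used commutativity, namely the passage from ``all generators of $K/I_k$ are integral over $\mathbb{Z}$'' to ``the additive group of $K/I_k$ is finitely generated.''

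First I would apply Lemmas 3 and 4 of \cite{kyblv} to each generator $a_i$ of the finitely generated finitely separable PI-ring $K$; those lemmas concern a single element at a time and do not presuppose commutativity, so they yield, as before, a square-free $k_i\in\mathbb{N}$ and a monic $f_i(x)\in\mathbb{Z}[x]$ without constant term with $k_i f_i(a_i)=0$. Setting $k=\operatorname{lcm}(k_1,\dots,k_n)$ (automatically square-free) and $I_k=\{b\in K\mid kb=0\}$, every image $\bar a_i\in K/I_k$ satisfies the monic integer relation $f_i(\bar a_i)=0$ and is therefore integral over $\mathbb{Z}$.

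The key substitution occurs here. Shirshov's height theorem, applied to the finitely generated PI-ring $K/I_k$, supplies a finite set $W=\{w_1,\dots,w_m\}$ of words in the $\bar a_i$ of bounded length and a height $h$ such that $K/I_k$ is additively spanned by products $w_{j_1}^{s_1}\cdots w_{j_l}^{s_l}$ with $l\le h$. The standard corollary in the presence of integral generators (see \cite{shir}) forces each word $w_r\in W$ to satisfy its own monic integer relation, whence the exponents $s_i$ may be taken bounded; the spanning set of products becomes finite, and $K/I_k$ is a finitely generated $\mathbb{Z}$-module, that is, $K$ is a finite extension of $I_k$. This is the only step in which the PI hypothesis is actually consumed, and I expect it to be the main technical point to check carefully: in the commutative setting the integrality of generators propagates to the whole subring trivially, while here it must travel through the height-theorem machinery.

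The remainder of Lemma 4's proof carries over without change. Factorising $k=p_1\cdots p_r$ into distinct primes and invoking B\'ezout on the pairwise coprime integers $k/p_1,\dots,k/p_r$ produces integers $z_i$ with $\sum z_i(k/p_i)=1$, whence $I_k=(k/p_1)I_k+\dots+(k/p_r)I_k$; each summand has prime characteristic $p_i$, so the summands intersect trivially, the sum is direct, and the decomposition realises $I_k$ as the internal direct product of ideals of pairwise distinct prime characteristics. This step uses only the $\mathbb{Z}$-module structure of $I_k$ and therefore makes no appeal to commutativity, completing the extension of Lemma 4 to the PI case.
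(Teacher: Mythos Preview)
Your proposal is correct and matches the paper's approach exactly: the paper offers no separate proof of the corollary, merely embedding the hint ``благодаря теореме Ширшова о высоте'' in the statement itself, and you have faithfully unpacked that hint by locating the single commutativity-dependent step of Lemma~4 (passing from integrality of the generators of $K/I_k$ to finite generation of its additive group) and replacing it with the standard Shirshov height-theorem argument, while observing that the Bézout decomposition of $I_k$ is purely additive and needs no change.
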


\begin{lemma}
Если конечно порожденное коммутативное кольцо простой характеристики финитно отделимо, то оно является подпрямым произведением конечного кольца и некоторого конечного набора колец без делителей нуля.
\end{lemma}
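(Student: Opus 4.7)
План доказательства: поскольку $K$ --- конечно порожденное коммутативное кольцо простой характеристики $p$, оно является гомоморфным образом кольца многочленов $Z_{p}[x_{1},\dots ,x_{n}]$, которое нётерово по теореме Гильберта о базисе. Следовательно, $K$ также нётерово, и к нулевому идеалу можно применить теорему Ласкера--Нётер (п. 11 из §2), получив разложение
\begin{equation*}
   (0)=Q_{1}\cap Q_{2}\cap \dots \cap Q_{m},
\end{equation*}
где все $Q_{i}$ --- примарные идеалы кольца $K$.

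По лемме 2 каждый из идеалов $Q_{i}$ либо простой, либо имеет конечный индекс в $K$. Разобьём семейство $\{Q_{1},\dots ,Q_{m}\}$ на две части: пусть $Q_{i_{1}},\dots ,Q_{i_{s}}$ --- идеалы конечного индекса, а $P_{1},\dots ,P_{t}$ --- простые идеалы, входящие в разложение. Положим $J=Q_{i_{1}}\cap \dots \cap Q_{i_{s}}$ (с соглашением $J=K$ в вырожденном случае, когда идеалов конечного индекса среди $Q_{i}$ нет). Диагональное вложение даёт инъекцию $K/J\hookrightarrow \prod_{j}K/Q_{i_{j}}$, правая часть которой конечна как конечное произведение конечных колец; поэтому $K/J$ --- конечное кольцо. Кольца $K/P_{1},\dots ,K/P_{t}$ по определению простого идеала не содержат делителей нуля.

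Поскольку $(0)=J\cap P_{1}\cap \dots \cap P_{t}$, естественный гомоморфизм
\begin{equation*}
 \psi\colon K\to K/J\times K/P_{1}\times \dots \times K/P_{t},
\end{equation*}
составленный из канонических проекций, инъективен, а его композиции с проекциями на сомножители сюръективны. Это и означает, что $K$ является подпрямым произведением конечного кольца $K/J$ и конечного набора колец без делителей нуля $K/P_{1},\dots ,K/P_{t}$.

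Существенных препятствий здесь не возникает: доказательство по существу сводится к аккуратному соединению теоремы Ласкера--Нётер с уже установленной леммой 2. Единственное, за чем стоит проследить, --- это корректная трактовка вырожденных случаев (когда $K$ само конечно или когда $K$ уже не имеет делителей нуля), снимаемая соглашениями о пустом пересечении, а также явное обоснование нётеровости $K$, необходимое для применимости теоремы Ласкера--Нётер.
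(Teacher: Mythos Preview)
Your argument is correct and follows essentially the same route as the paper: establish noetherianity via Hilbert's basis theorem, apply the Lasker--Noether decomposition to the zero ideal, and invoke Lemma~2 to classify each primary quotient as finite or an integral domain. The only difference is cosmetic --- you explicitly group the finite-index primary ideals into a single intersection $J$ so that $K/J$ is the one finite factor promised by the statement, whereas the paper leaves this regrouping implicit.
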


\begin{proof}
Пусть конечно порожденное коммутативное кольцо $K$ простой характеристики является финитно отделимым. По теореме Гильберта любое конечно порожденное коммутативное кольцо является нетеровым. По теореме Ласкера--Нетер \cite{zarc} каждый идеал кольца $K$ является пересечением некоторого конечного семейства примарных идеалов. В частности, $\bigcap\limits_{i=1}^{n}{I_{i}}=0$ для некоторого семейства примарных идеалов $I_{i}$ кольца $K$.
Это означает, что кольцо $K$ раскладывается в подпрямое произведение колец $K/I_{i}$. Осталось применить лемму 2, из которой следует, что каждое кольцо $K/I_{i}$ либо конечно, либо является кольцом без делителей нуля. Лемма 5 доказана.
\end{proof}

\begin{lemma}
Конечно порожденное коммутативное кольцо без делителей нуля простой характеристики, в котором каждые два трансцендентных элемента целозависимы, является финитно отделимым.
\end{lemma}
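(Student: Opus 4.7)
The plan is to reduce $K$ to a Dedekind domain by passing to its normalization, and then produce separating finite-index ideals via maximal-ideal avoidance and conductor arguments. If $K$ has no transcendental element, all generators are algebraic over $Z_{p}$, so $K$ is a finitely generated abelian group of characteristic $p$; being a finite integral domain it is a finite field, and hence trivially finitely separable. Otherwise fix a transcendental $t \in K$. The hypothesis gives that every transcendental element is integrally dependent on $t$, hence integral over $Z_{p}[t]$, while every algebraic element is integral over $Z_{p} \subseteq Z_{p}[t]$. Therefore $K$ is integral over $Z_{p}[t]$ and, being a finitely generated $Z_{p}$-algebra, a finitely generated $Z_{p}[t]$-module --- a one-dimensional Noetherian integral domain. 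Let $\tilde K$ be the integral closure of $Z_{p}[t]$ in $L = \mathrm{Frac}(K)$: this is a Dedekind domain, also finitely generated over $Z_{p}[t]$, with $K \subseteq \tilde K$. By Maltsev's theorem \cite{mal}, finite separability descends to subrings, so it suffices to show that $\tilde K$ is finitely separable. In $\tilde K$ every nonzero ideal has finite index, and every nonzero element lies in only finitely many maximal ideals.

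Given $a \in \tilde K$ and a subring $A \subseteq \tilde K$ with $a \notin A$, I would produce a nonzero ideal $I$ of $\tilde K$ with $a \notin A + I$. If $A$ is finite, take $I$ to be a maximal ideal of $\tilde K$ avoiding the finite set $\{a - a' : a' \in A\}$ of nonzero elements; this is possible since $\tilde K$ has infinitely many maximal ideals and each nonzero element sits in only finitely many. If $A$ is infinite, then since the algebraic closure of $Z_{p}$ in $L$ is a finite field (the field of constants of the function field $L$), $A$ must contain a transcendental element $s$, and $\tilde K$ becomes a finitely generated $A$-module. When $\mathrm{Frac}(A) = L$, clearing denominators of an $A$-generating set of $\tilde K$ inside $\mathrm{Frac}(A)$ produces a nonzero element of the conductor $\mathfrak{c} = \{x \in \tilde K : x \tilde K \subseteq A\}$; this $\mathfrak{c}$ is a nonzero ideal of $\tilde K$ contained in $A$, so taking $I = \mathfrak{c}$ gives $A + \mathfrak{c} = A$, which omits $a$.

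The hard part will be the remaining subcase --- $A$ infinite with $\mathrm{Frac}(A)$ a proper subfield of $L$ --- where the conductor may vanish. My plan is to introduce the intermediate Dedekind domain $B = \tilde K \cap \mathrm{Frac}(A)$: inside $B$ the subring $A$ has full fraction field, so the conductor argument yields a separating ideal $J$ of $B$ whenever $a \in B$, which lifts to a finite-index ideal of $\tilde K$. For $a \notin \mathrm{Frac}(A)$, I would lift a suitable ideal of $B$ or of $Z_{p}[s]$ to an ideal $I$ of $\tilde K$, exploiting that the image of $A$ in $\tilde K/I$ is confined to the image of $B$, while $a$ projects outside this subring for an appropriate choice of $I$ --- securing such an $I$ that preserves the transverse component of $a$ across the finite extension $L / \mathrm{Frac}(A)$ is the technical crux of the argument.
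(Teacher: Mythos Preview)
Your route is quite different from the paper's, which is much shorter. The paper never normalizes: given $a\notin A$, it splits on whether $A$ contains a transcendental element. If not, every element of $A$ is algebraic over $Z_p$; the paper argues that then $A^{*}$ is a regular cancellative commutative semigroup, hence a group, so $A$ is a field, and a residually finite field is finite --- separation then comes from residual finiteness of $K$ (citing \cite{orz},\cite{kybl}). If $A$ does contain a transcendental $b$, then by the hypothesis every generator of $K$ is integral over $Z\langle b\rangle$, so $K$ is a finitely generated module over the monogenic subring $Z\langle b\rangle\subseteq A$; the paper then simply invokes Proposition~5 of \cite{kyblv}, which grants separation from any subring containing $b$. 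All the work is outsourced to that proposition, and the cases you call hard never arise because the transcendental witness already lies in $A$.

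Your normalization/conductor approach is structurally sound through the case $\mathrm{Frac}(A)=L$, but you have a genuine gap in the last subcase $\mathrm{Frac}(A)\subsetneq L$, and you say so yourself: ``securing such an $I$\ldots\ is the technical crux.'' As written this is a wish, not an argument. It can be salvaged --- with $B=\tilde K\cap\mathrm{Frac}(A)$ the quotient $\tilde K/B$ is a finitely generated torsion-free, hence projective, $B$-module, so for $a\notin B$ a Krull-intersection argument at a maximal ideal $\mathfrak m$ of $B$ yields $a\notin B+\mathfrak m^{n}\tilde K$ for large $n$; for $a\in B\setminus A$ one uses the conductor of $A$ in $B$ and faithful flatness of $\tilde K$ over $B$ to lift --- but none of this is in your proposal. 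The paper's device of keeping the module-finiteness over a subring of $A$ itself (namely $Z\langle b\rangle$) is precisely what lets it bypass this whole subcase and hand everything to Proposition~5 of \cite{kyblv}.
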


\begin{proof}
Пусть $K$ конечно порожденное коммутативное кольцо без делителей нуля имеет простую характеристику. Пусть любые два трансцендентных элемента кольца $K$ целозависимы.

Покажем, что $K$ --- финитно отделимое кольцо. Пусть $a\notin A$ для некоторого элемента $a$ и подкольца $A$ кольца $K$. Если в кольце $A$ нет трансцендентных элементов, то каждый элемент $b$ из $A$ является регулярным, то есть представимым в виде $b=b^{2}b'$ для некоторого элемента $b'\in A$. По умножению множество $A^{* }$, ненулевых элементов $A$, образует коммутативную регулярную полугруппу с сокращением. Как хорошо известно, $A^{* }$ --- это группа. Поэтому $A$ --- поле. Любое конечно порожденное коммутативное кольцо является финитно аппроксимируемым кольцом \cite{orz},\cite{kybl}. Поэтому $K$ --- финитно аппроксимируемое кольцо. Поэтому и поле $A$ --- финитно аппроксимируемое кольцо. Откуда заключаем, что $A$ --- конечное поле. Тогда возможность разделения элемента $a$ от конечного подкольца следует из финитной аппроксимируемости кольца $K$.

Если в кольце $A$ есть некоторый трансцендентный элемент $b$, то все образующие кольца $K$ являются либо целыми, либо целыми относительно $b$ по лемме 3. Отсюда вытекает, что кольцо $K$ является конечно порожденным модулем над моногенным подкольцом $Z\left<{b}\right>$.

По предложению 5 из работы [8] следует, что кольце $K$ каждый элемент $a$ можно финитно отделить от любого подкольца, содержащего $b$, но не содержащего $a$. Таковым является и подкольцо $A$. Лемма 6 доказана.
\end{proof}

\begin{lemma}
Кольцо многочленов $C_{p}[x]$ от одного переменного с коэффициентами из произвольного конечного поля $C_{p}$ финитно отделимо.
\end{lemma}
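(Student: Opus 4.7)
My plan is to deduce the lemma from Lemma 6 applied to $K = C_{p}[x]$. The ring $K$ is evidently a finitely generated commutative domain of prime characteristic $p$ (being generated over $Z_{p}$ by a primitive element $\alpha$ of $C_{p}$ together with $x$), so only the integral-dependence hypothesis of Lemma 6 requires work: I must show that any two transcendental elements of $C_{p}[x]$ are integrally dependent in the sense of Definition 5.

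The transcendental elements of $C_{p}[x]$ over $Z_{p}$ are exactly the nonconstant polynomials, since every element of the finite field $C_{p}$ satisfies the monic integer polynomial $z^{q} - z$, where $q = |C_{p}|$. I would fix nonconstant $f, g \in C_{p}[x]$ and verify integral dependence as follows. The pivotal step is to show that $x$ is integral over $Z_{p}[g]$: writing $g = a_{d} x^{d} + \dots + a_{0}$ with $a_{d} \in C_{p}^{*}$, the identity $a_{d} x^{d} + \dots + a_{0} - g = 0$, after division by $a_{d}$, exhibits $x$ as a root of a monic polynomial over $C_{p}[g]$; since each element of $C_{p}$ is integral over $Z_{p}$ by the Frobenius relation above, $C_{p}[g]$ is integral over $Z_{p}[g]$, and transitivity of integrality gives $x$ integral over $Z_{p}[g]$. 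Consequently $C_{p}[x]$ is a finitely generated $Z_{p}[g]$-module, so $f$ in particular satisfies a monic relation $f^{n} + c_{n-1}(g) f^{n-1} + \dots + c_{0}(g) = 0$ with $c_{i} \in Z_{p}[u]$.

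Multiplying through by $f$ removes the constant-in-$f$ term and produces $P_{1}(g, f) = 0$, where $P_{1}(u, v) := v^{n+1} + c_{n-1}(u) v^{n} + \dots + c_{0}(u) v \in Z_{p}[u, v]$ is monic in $v$ and without constant term. Symmetrically I obtain $P_{2}(u, v) \in Z_{p}[u, v]$ monic in $u$, without constant term, and vanishing at $(g, f)$. Then, mimicking the bookkeeping at the end of the proof of Lemma 3, I would pad $P_{1}$ and $P_{2}$ by extra factors of $v$ or $u$ respectively, so that $\deg_{v} P_{1}$ strictly exceeds the $v$-degrees of all coefficients appearing in $P_{2}$, and symmetrically. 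Then $\varphi := P_{1} + P_{2}$ is unitary in both variables, has no constant term, and satisfies $\varphi(g, f) = 0$. Thus $f$ and $g$ are integrally dependent, and Lemma 6 yields finite separability of $C_{p}[x]$.

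The only real obstacle is the passage from $C_{p}$-coefficients to $Z_{p}$-coefficients at the outset: although $x$ is trivially integral over $C_{p}[g]$, Definition 5 demands coefficients in $Z_{p}[u, v]$, and the Frobenius relation $z^{q} = z$ together with transitivity of integrality is exactly what bridges this gap.
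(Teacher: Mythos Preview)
Your argument is correct, but it takes a different route from the paper. The paper does not invoke Lemma~6; it proceeds directly. Given $f(x)\notin A$ for a subring $A$ of $C_{p}[x]$, two cases are distinguished. If $A$ consists only of constants, then $A$ is finite and separation follows from the residual finiteness of finitely generated commutative rings. If $A$ contains some polynomial $b=g(x)$ of degree $n\geq 1$, the paper shows by induction, using ordinary division with remainder by $g(x)$, that the finite set $B=\{h(x):\deg h\leq n\}$ generates $C_{p}[x]$ as a module over the monogenic subring $Z\langle b\rangle$; Proposition~5 of \cite{kyblv} then furnishes the separating homomorphism into a finite ring.

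Your reduction to Lemma~6 is legitimate and arguably more conceptual, since it recycles the general criterion already established. The price is the additional bookkeeping: you must manufacture a two-variable polynomial over $Z_{p}$ (not merely over $C_{p}$) that is unitary in both variables and without constant term, which forces the Frobenius/transitivity step to descend from $C_{p}$ to $Z_{p}$ and the padding-and-summing device borrowed from the end of Lemma~3. The paper sidesteps all of this because the core content of Lemma~6 in this concrete situation \emph{is} the module-finiteness of $C_{p}[x]$ over $Z\langle b\rangle$, and that is a one-line consequence of division with remainder. Both arguments ultimately rest on Proposition~5 of \cite{kyblv}; yours packages the reduction through Lemma~6, while the paper's unpacks that same reduction by hand in this special case.
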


\begin{proof}

Пусть $f\left({x}\right)\notin A$, где $f\left({x}\right)$ --- некоторый многочлен из кольца $C_{p}[x]$, а $A$ --- некоторое подкольцо кольца $C_{p}[x]$. Если $A$ состоит из многочленов нулевой степени, то   $A$ --- конечное множество. Возможность финитного отделения любого элемента $f\left({x}\right)\notin A$ от конечного подкольца $A$ следует из финитной аппроксимируемости конечно порожденных коммутативных колец \cite{orz}, \cite{kybl}.

Пусть в кольце $A$ есть некоторый многочлен $b=g\left({x}\right)$ и пусть степень многочлена $g\left({x}\right)$ равна $n$ и $n\ge 1$. Рассмотрим кольцо $C_{p}[x]$ как модуль над моногенным подкольцом $Z<b>$.

Покажем, что этот модуль является конечно порожденным и конечное множество $B=\{h\left({x}\right)|degh\left({x}\right)\le n\}$, состоящее из всех многочленов $h\left({x}\right)$ из $C_{p}[x]$ степени меньшей или равной $n$, является порождающим множеством этого модуля.

Пусть $[B]$ --- подмодуль модуля $C_{p}[x]$, как модуля над кольцом $Z<b>$, порожденный множеством $B$. Покажем, что $C_{p}[x]=[B]$, то есть каждый многочлен $\alpha \left({x}\right)$ из $C_{p}[x] \in [B]$.
Если $deg\alpha \left({x}\right)\le n$, то $\alpha \left({x}\right)\in B\subset [B]$. Пусть $deg\alpha \left({x}\right)>n$. Дальнейшие рассуждения проводим по индукции. Предположим, что для любого многочленам $\beta \left({x}\right)$ из $C_{p}[x]$, степень которого меньше степени многочлена $\alpha \left({x}\right)$ имеет место  $\beta \left({x}\right)\in [B]$.
Разделим многочлен $\alpha \left({x}\right)$ на многочлен $g\left({x}\right)$ в кольце $C_{p}[x]$ с остатком, то есть рассмотрим равенство:
\begin{center}
 $\alpha \left({x}\right)=g\left({x}\right)\cdot \beta \left({x}\right)+r\left({x}\right)$
\end{center}
для некоторых многочленов $\beta \left({x}\right)$ и $r\left({x}\right)$ из  $C_{p}[x]$ и $degr\left({x}\right)<degg\left({x}\right)$. Тогда по определению $r\left({x}\right)\in B\subset [B]$. Но из сравнения степеней многочленов следует, что
\begin{center}
    $deg\alpha \left({x}\right)=degg\left({x}\right)+deg\beta \left({x}\right)$.
\end{center}
Откуда 
\begin{center}
    $deg\beta \left({x}\right)=deg\alpha \left({x}\right)-degg\left({x}\right)=deg\alpha \left({x}\right)-n<deg\alpha \left({x}\right)$.
\end{center}
По индукционному предположению $\beta \left({x}\right)\in [B]$. Но тогда
\begin{center}
    $g\left({x}\right)\cdot \beta \left({x}\right)=b\cdot \beta \left({x}\right)\in b[B]\subset [B]$.
\end{center}
Мы получили, что \begin{center}
    $g\left({x}\right)\cdot \beta \left({x}\right)\in [B]$ и $r\left({x}\right)\in [B]$.
\end{center} 
Поэтому $\alpha \left({x}\right)=g\left({x}\right)\cdot \beta \left({x}\right)+r\left({x}\right)\in [B]$.

Итак, мы доказали по индукции, что $C_{p}[x]=[B]$. Заметим, что $B$ --- конечное множество.Это означает, что $C_{p}[x]$ является конечно порожденным модулем над своим подкольцом $Z<b>$. Поскольку $b\in A$, то по предложению 5 из \cite{kyblv} существуют конечное кольцо $F$ и гомоморфизм $\gamma :C_{p}[x]\rightarrow F$ такой, что $\gamma \left({a}\right)\notin \gamma \left({A}\right)$. Вывод: кольцо $C_{p}[x]$ финитно отделимо. Лемма 7 доказана.
\end{proof}

\section{Основные результаты}

В этом параграфе мы излагаем доказательства основных результатов.

Основным результатом настоящей работы является следующая теорема.
\begin{proclaim}{Теорема} Для того, чтобы конечно порожденное коммутативное кольцо было финитно отделимо, необходимо и достаточно, чтобы оно было конечным расширением некоторого идеала кручения $I_{k}$ ($k$ --- число свободное от квадратов), который является  подпрямым произведением конечного кольца и некоторого конечного набора колец без делителей нуля простых характеристик, в которых любые два трансцендентные элемента целозависимы.
\end{proclaim}

\begin{proof}
Необходимость. Следует из {лемм 2--5} и замкнутости класса финитно отделимых колец относительно подколец и гомоморфных образов.

Достаточность. Следует из {леммы 6} настоящей работы, {предложения 2} работы \cite{kyblv} и замкнутости класса финитно отделимых колец относительно конечных прямых произведений и подколец.
\end{proof}

\begin{proclaim}{Следствие}
Конечно порожденное коммутативное кольцо является финитно отделимым в том и только в том случае, когда каждое его двупорожденное подкольцо финитно отделимо.
\end{proclaim}

\begin{proof}
 Вытекает из теоремы и замкнутости класса финитно отделимых колец относительно подколец и гомоморфных образов.
\end{proof}

\begin{proclaim}{Предложение} Двупорожденное кольцо простой характеристики $K=Z_{p}\left<{a,b\mid f\left({a,b}\right)=0}\right>$, где $f\left({x,y}\right)$ --- однородный многочлен над полем $Z_{p}$ от двух переменных, будет финитно отделимым, тогда и только тогда, когда $f\left({x,y}\right)$ --- сепарабельный многочлен, то есть является произведением различных неприводимых многочленов.
\end{proclaim}

\begin{proof}
Необходимость. Пусть кольцо $K=\mathbb{Z}_{p}\left<{a,b\mid f\left({a,b}\right)=0}\right>$ финитно отделимо, где
 $f\left({x,y}\right)$ --- однородный многочлен от двух переменных над полем $\mathbb{Z}_{p}$ некоторой степени $n\ge 1$. Предположим, что многочлен $f\left({x,y}\right)$ не является сепарабельным, Тогда\begin{center}
      $f\left({x,y}\right)=g\left({x,y}\right)^{2}h\left({x,y}\right)$
 \end{center} для некоторых однородных многочленов $g\left({x,y}\right)$ и $h\left({x,y}\right)$ ненулевой степени из  $\mathbb{Z}_{p}[x,y]$.
 
 Пусть $I=\{u\in K\mid uh\left({a,b}\right)=0\}$. Ясно, что $I$ --- идеал кольца $K$ и в факторе $K/I$ выполнено $\overline{g\left({a,b}\right)}^{2}=0$, где чертой обозначается образ элемента при каноническом гомоморфизме из $K\rightarrow K/I$. Поскольку класс финитно отделимых колец замкнут относительно гомоморфизмов, то применяя {лемму 1} к кольцу $K/I$, получим $\overline{g\left({a,b}\right)\varphi \left({b}\right)}=0$ для некоторого унитарного многочлена $\varphi \left({x}\right)$ из $Z_{p}$ без свободного члена. Это означает, что $g\left({a,b}\right)\varphi \left({b}\right)\in I$ в кольце $K$, откуда следует
\begin{center}
 $g\left({a,b}\right)\varphi \left({b}\right)h\left({a,b}\right)=0$.
\end{center}
  Из последнего равенства следует
  \begin{center}
       $g\left({x,y}\right)\varphi \left({y}\right)h\left({x,y}\right)=f\left({x,y}\right)\psi \left({x,y}\right)$
  \end{center} 
  для некоторого многочлена $\psi \left({x,y}\right)$ из $Z_{p}[x,y]$. Получаем равенство в кольце многочленов $Z_{p}[x,y]$:
  \begin{center}
       $g\left({x,y}\right)\varphi \left({y}\right)h\left({x,y}\right)=g\left({x,y}\right)^{2}h\left({x,y}\right)\psi \left({x,y}\right)$.
  \end{center}
Из последнего равенства путем сокращения получаем равенство  
\begin{center}
    $\varphi \left({y}\right)=g\left({x,y}\right)\psi \left({x,y}\right)$,
\end{center} 
что невозможно. Противоречие получено из предположения, что многочлен $f\left({x,y}\right)$ не является сепарабельным. Вывод: $f\left({x,y}\right)$ --- сепарабельный многочлен. Необходимость доказана.

Достаточность. Пусть двупорожденное кольцо $K$ простой характеристики $p$ задано соотношением $K=Z_{p}\left<{a,b\mid f\left({a,b}\right)=0}\right>$, где $f\left({x,y}\right)$ --- однородный сепарабельный многочлен от двух переменных над полем $Z_{p}$ некоторой степени $n\ge 1$. Однородный сепарабельный многочлен раскладывается однозначно в произведение различных неприводимых однородных многочленов 
\begin{center}
    $f\left({x,y}\right)=f_{1}\left({x,y}\right)\cdot f_{2}\left({x,y}\right)\cdots f_{m}\left({x,y}\right)$.
\end{center} 
Главный идеал $I\left({f}\right)$ кольца многочленов $Z_{p}[x,y]$, порожденный многочленом $f=f\left({x,y}\right)$, является пересечением семейства главных идеалов $I\left({f_{i}}\right)$, порожденных многочленами $f_{i}=f_{i}\left({x,y}\right)$, то есть
\begin{equation*}
    I\left({I}\right)=\overset{i=m}{\underset{i=1}{\cap }}I\left({f_{i}}\right).
\end{equation*}  
Откуда следует, что кольцо $K=Z^{* }_{p}[x,y]/I\left({f}\right)$ является подпрямым произведением семейства колец $Z^{* }_{p}[x,y]/I\left({f_{i}}\right)$. Финитная отделимость кольца $K$ следовала бы из финитной отделимости каждого из колец $Z^{* }_{p}[x,y]/I\left({f_{i}}\right)$ в силу замкнутости класса финитно отделимых колец относительно конечных прямых произведений, гомоморфизмов и подколец. Поэтому далее, без ограничения общности, будем считать, что $f\left({x,y}\right)$ --- неприводимый и унитарный относительно $x$ многочлен.

Это означает, что кольцо $Z^{* }_{p}[x,y]/I\left({f}\right)$ --- является кольцом без делителей нуля. Поэтому $K$ является кольцом без делителей нуля  (поскольку $K$ изоморфно кольцу $Z^{* }_{p}[x,y]/I\left({f}\right)$). Пусть $\overline{K}$ --- поле частных кольца $K$ и пусть $\Lambda $ --- алгебраическое замыкание поля $\overline{K}$. Кольцо $K$ является подкольцом поля $\Lambda $. Пусть поле $\overline{Z_{p}}$ --- алгебраическое замыкание простого поля $Z_{p}$. Поле  $\overline{Z_{p}}$ можно рассматривать как подполе $\Lambda $.
Над полем $\overline{Z_{p}}$ унитарный многочлен  $f\left({t,1}\right)$ раскладывается в произведение линейных множителей 
\begin{equation*}
    f\left({t,1}\right)=\left({t-\lambda _{1}}\right)\left({t-\lambda _{2}}\right)..\left({t-\lambda _{n}}\right)
\end{equation*} 
для некоторых $\lambda _{1},\lambda _{2}, \dots, \lambda _{n}$ из $\overline{Z_{p}}$ (см. \cite{byrbv}). Поэтому 
\begin{multline*}
    f\left({x,y}\right)=y^{n}\left({\frac{x}{y}-\lambda _{1}}\right)\left({\frac{x}{y}-\lambda _{2}}\right)\dots\left({\frac{x}{y}-\lambda _{n}}\right) ={}\\{}= \left({x-\lambda _{1}y}\right)\left({x-\lambda _{2}y}\right)\dots\left({x-\lambda _{n}y}\right).
\end{multline*}
 Поэтому в поле  $\Lambda $ имеет место равенство:
 \begin{equation*}
    f\left({a,b}\right)=\left({a-\lambda _{1}b}\right)\left({a-\lambda _{2}b}\right)\cdots\left({a-\lambda _{n}b}\right).
\end{equation*} 
Поскольку $f\left({a,b}\right)=0$, то и $\left({a-\lambda _{1}b}\right)\left({a-\lambda _{2}b}\right)\cdots\left({a-\lambda _{n}b}\right)=0$. Это означает, что $a-\lambda _{i}b=0$ для некоторого $i$, откуда $a=\lambda _{i}b$. Пусть $C_{p}$ --- подполе поля $\overline{Z_{p}}$, порожденное элементом $\lambda _{i}$. Из определений следует, что $\lambda _{i}$ --- целый элемент над $Z_{p}$, то есть $\varphi \left({\lambda _{i}}\right)=0$, для некоторого ненулевого многочлена $\varphi \left({x}\right)$ из кольца. $Z_{p}[x]$. Поэтому $C_{p}$ совпадает с подкольцом, порожденным $\lambda _{i}$, которое, в свою очередь, порождается как абелева группа конечным набором элементов $1,\lambda _{i},\lambda ^{2}_{i},\dots, \lambda ^{m}_{i}$, где $m$ --- это степень многочлена $\varphi \left({x}\right)$. Конечно порожденная абелева группа простой характеристики является конечной. Вывод: $C_{p}$ --- конечное поле. Заметим, что элемент $b$ является трансцендентным над полем $C_{p}$, то есть $g\left({b}\right)\ne 0$ для всех ненулевых многочленов $g\left({x}\right)$ из кольца $C_{p}[x]$. В противном случае элемент $b$ был бы целым элементом над кольцом $Z_{p}$, то есть $h\left({b}\right)=0$ для некоторого унитарного многочлена $h\left({x}\right)$ без свободного члена из $Z_{p}[x]$ (поскольку каждый элемент кольца $C_{p}$ является целым над $Z_{p}$). Но в кольце $K$ равенство $h\left({b}\right)=0$ означает, что многочлен $h\left({x}\right)$ делится на многочлен $f\left({x,y}\right)$ в кольце многочленов $Z_{p}[x,y]$. Последнее, очевидно, неверно. Полученное означает, что в поле $\Lambda $ подкольцо $K^{* }$, порожденное элементом $\lambda _{i}$ и элементом $b$, изоморфно кольцу многочленов $C_{p}[x]$. По {лемме 7} кольцо $C_{p}[x]$ финитно отделимо. Итак, кольцо $K^{* }$ финитно отделимо. Остается заметить, что кольцо $K$ является подкольцом кольца $K^{* }$, поскольку $a=\lambda _{i}b\in K^{* }$. Предложение доказано полностью.
\end{proof}

 Есть достаточно простой критерий определяющий сепарабельность многочлена от одного переменного и от двух переменных над полем.
\begin{proclaim}{Замечание.} Многочлен ненулевой степени от одного переменного $\varphi \left({t}\right)$ является сепарабельным в том и только в том случае, когда этот многочлен взаимно прост со своей производной $\varphi '\left({t}\right)$. (Это несложное упражнение).
\end{proclaim}

 Как нетрудно видеть, однородный многочлен от двух переменных $f\left({x,y}\right)$ ненулевой степени из $Z_{p}[x,y]$ будет сепарабельным тогда и только тогда, когда сепарабельным будет многочлен от одного переменного $\varphi \left({t}\right)=f\left({t,1}\right)$.

\section{Примеры и контрпримеры}

\begin{proclaim}{Гипотеза}
Конечно порожденное коммутативное кольцо простой характеристики без делителей нуля будет финитно отделимым, тогда и только тогда, когда ее любые два трансцендентные образующие являются целозависимыми.
\end{proclaim}
Следующий пример показывает, что данная гипотеза в такой формулировке, вообще говоря, неверна.

\begin{proclaim}{Пример 1}
 Кольцо $K=Z_{3}\left<{a,b\mid a^{2}+b-b^{2}=0}\right>$ не является финитно отделимым.
\end{proclaim}

\begin{proof}
Во-первых, нетрудно убедиться, что многочлен $f\left({x,y}\right)=x^{2}+y-y^{2}$ является неприводимым (то есть не разлагается на множители ненулевой степени) в кольце $Z_{3}[x,y]$. Поэтому фактор-кольцо $Z_{3}[x,y]/I\left({x^{2}+y-y^{2}}\right)$, где $I\left({x^{2}+y-y^{2}}\right)$ --- главный идеал, порожденный многочленом $x^{2}+y-y^{2}$, является кольцом без делителей нуля.

Поскольку кольцо $K$ вкладывается естественным образом в кольцо $Z_{3}[x,y]/I\left({x^{2}+y-y^{2}}\right)$, то можно сделать вывод о том, что $K$ --- кольцо без делителей нуля.

Пусть $c=a-b$, откуда $a=c+b$. Тогда
\begin{equation*}
    c^{2}+2bc+b^{2}=b^{2}-b,
\end{equation*}  
откуда $c^{2}+2bc+b=0$. Из последнего равенства получаем
\begin{equation*}
    \left({2c+1}\right)b+c^{2}=0.
\end{equation*}

Предположим, что $b\in Z\left<c\right>$. Это означает, что $b=f\left({c}\right)$ для некоторого ненулевого многочлена $f\left({x}\right)$ без свободного члена из кольца многочленов $Z_{3}[x]$.

Если степень многочлена $f\left({x}\right)$ равна 1, то есть $f\left({x}\right)=zx$ для некоторого числа $z=1$ или $z=2$, то в кольце $K$ имеет место 
\begin{equation*}
    \left({2c+1}\right)zc+c^{2}=0,
\end{equation*} 
откуда 
\begin{equation*}
    \left({2z+1}\right)c^{2}+zc=0.
\end{equation*}
Если $z=1$, то получим $c=0$, что противоречит выбору $c$. Если же $z=2$, то получаем $2c^{2}+2c=0$, откуда $c^{2}+c=0$. Это означает, что $c$ --- целый алгебраический элемент.

Если степень многочлена $f\left({x}\right)$ больше $1$, то из равенства 
\begin{equation*}
    \left({2c+1}\right)f\left({c}\right)+c^{2}=0
\end{equation*} 
также следует, что $c$ --- целый алгебраический элемент. Тогда аддитивная группа кольца  $Z\left<c\right>$ конечно порождена и имеет характеристику 3. Заключаем, что $Z\left<c\right>$  --- конечное кольцо. Поскольку по предположению $b=f\left({c}\right)\in Z\left<c\right>$, заключаем, что $Z\left<b\right>$ --- конечное кольцо (как подкольцо конечного).

 Тогда $g\left({b}\right)=0$ для некоторого ненулевого многочлена $g\left({x}\right)$ без свободного члена из кольца многочленов $Z_{3}[x]$. Из изоморфизма колец $K$ и $Z^{* }_{3}[x,y ]/\left({x^{2}+y-y^{2}}\right)$ заключаем, что многочлен $g\left({x}\right)$ делится на многочлен $x^{2}+y-y^{2}$ в кольце многочленов $Z_{3}[x,y]$. А это неверно! Противоречие получено из предположения $b\in Z\left<c\right>$.
 
Вывод: $b\notin Z\left<c\right>$. Предположим, что кольцо $K$ является финитно отделимым кольцом. Тогда $\varphi \left({b}\right)\notin \varphi \left(Z\left<c\right>\right)$ для некоторого гомоморфизма $\varphi:K\rightarrow F$ из кольца $K$ в некоторое конечное кольцо $F$. Тогда $\varphi \left({b^{2n}}\right)=\varphi \left({b^{n}}\right)$ для некоторого натурального $n$. Отсюда 
\begin{equation*}
    \varphi \left({\left({2c+1}\right)^{2n-1}b^{2n}}\right)=\varphi \left({\left({2c+1}\right)^{2n-1}b^{n}}\right).
\end{equation*}
Из последнего равенства следует 
\begin{equation*}
    \varphi \left({\left({\left({2c+1}\right)b}\right)^{2n-1}b}\right)=\varphi \left({\left({2c+1}\right)^{n-1}\left({\left({2c+1}\right)b}\right)^{n}}\right),
\end{equation*}
откуда
\begin{equation*}
     \varphi \left({\left({-c^{2}}\right)^{2n-1}b}\right)=\varphi \left({\left({2c+1}\right)^{n-1}\left({-c^{2}}\right)^{n}}\right)\in \varphi \left({Z<c>}\right).
\end{equation*}
Имеем 
\begin{center}
    $\varphi \left({\left({-c^{2}}\right)^{2n-1}b}\right)\in \varphi \left({Z<c>}\right)$ и $\varphi \left({\left({2c+1}\right)b}\right)=\varphi \left({-c^{2}}\right)\in \varphi \left({Z<c>}\right)$.
\end{center}

Поскольку многочлены $u=\left({-x^{2}}\right)^{2n-1}$ и $v=2x+1$ являются взаимно простыми в кольце многочленов $Z_{3}[x]$, то их Н.О.Д$\left({u,v}\right)=1$. Тогда по теореме о линейном выражении Н.О.Д имеет место равенство
\begin{equation*}
    \alpha \left({x}\right)u\left({x}\right)+\beta \left({x}\right)v\left({x}\right)=1
\end{equation*} 
в кольце многочленов $Z_{3}[x]$. Тогда получаем 
\begin{center}
    $\varphi \left({\alpha \left({c}\right)\left({-c^{2}}\right)^{2n-1}b}\right)\in \varphi \left({Z\left<c\right>}\right)$ и $\varphi \left({\beta \left({c}\right)\left({2c+1}\right)b}\right)\in \varphi \left({Z\left<c\right> }\right)$,
\end{center}
откуда
\begin{equation*}
     \varphi \left({\alpha \left({c}\right)\left({-c^{2}}\right)^{2n-1}b+\beta \left({c}\right)\left({2c+1}\right)b}\right)\in \varphi \left({Z\left<c\right>}\right),
\end{equation*}
 то есть
\begin{equation*}
    \varphi \left({\left({\alpha \left({c}\right)\left({-c^{2}}\right)^{2n-1}+\beta \left({c}\right)\left({2c+1}\right)}\right)b}\right)\in \varphi \left({Z\left<c\right>}\right).
\end{equation*}
  Из последнего равенства получаем $\varphi \left({b}\right)\in \varphi \left(Z\left<c\right>\right)$, что противоречит выбору $\varphi$.

Противоречие получено из предположения о том, что кольцо $K$ является финитно отделимым кольцом. Вывод: $K$ не является финитно отделимым кольцом. Что и требовалось доказать.
\end{proof}

Пример 1 показывает, что из целозависимости образующих кольца, вообще говоря, не следует целозависимость произвольной пары трансцендентных элементов.

\begin{proclaim}{Пример 2}
Кольцо $K=Z_{2}\left<{a,b\mid a^{2}+b-b^{2}=0}\right>$ является финитно отделимым.
\end{proclaim}
\begin{proof}

Во-первых, нетрудно убедиться, что многочлен $f\left({x,y}\right)=x^{2}+y-y^{2}$ является неприводимым (то есть не разлагается на множители ненулевой степени) в кольце $Z_{2}[x,y]$.

Поэтому фактор-кольцо $Z_{2}[x,y]/I\left({x^{2}+y-y^{2}}\right)$, где $I\left({x^{2}+y-y^{2}}\right)$ -- главный идеал, порожденный многочленом $x^{2}+y-y^{2}$, является кольцом без делителей нуля.

Поскольку кольцо $K$ вкладывается естественным образом в кольцо $Z_{2}[x,y]/I\left({x^{2}+y-y^{2}}\right)$, то можно сделать вывод о том, что $K$ --- кольцо без делителей нуля.

Пусть $\varphi \left({x}\right)=x^{2}-x$. Заметим, что каждый элемент $c$ из кольца $K$ можно записать в виде $c=f\left({b}\right)a+g\left({b}\right)$ для некоторых многочленов $f\left({x}\right),g\left({x}\right)$ из кольца многочленов $Z_{2}[x]$, причем  $g\left({x}\right)$ без свободного члена.

Пусть $c$ --- ненулевой элемент, тогда хотя бы один из многочленов $f\left({x}\right),g\left({x}\right)$ не нулевой. Тогда $c-g\left({b}\right)=f\left({b}\right)a$, откуда 
\begin{equation*}
    \left({c-g\left({b}\right)}\right)^{2}=f\left({b}\right)^{2}a^{2}=-f\left({b}\right)^{2}\varphi \left({b}\right),
\end{equation*}
откуда \begin{equation*}
    c^{2}+g\left({b}\right)^{2}+f\left({b}\right)^{2}\varphi \left({b}\right)=0.
\end{equation*}
Если $g\left({x}\right)^{2}+f\left({x}\right)^{2}\varphi \left({x}\right)$ ненулевой многочлен, то $b$ --- целый алгебраический над кольцом $Z\left<c\right>$.

Но $a$ --- целый алгебраический над кольцом $Z\left<b\right>$,  поскольку в кольце $K$ по условию выполняется равенство $a^{2}+\varphi \left({b}\right)=0$. Тогда можно сделать вывод, что и $a$ --- целый алгебраический над кольцом $Z\left<c\right>$.

Вывод: кольцо $K$ простой характеристики $2$ является конечно порожденным модулем над моногенным подкольцом $Z\left<{c}\right>$. По {предложению 5} из работы \cite{kyblv} это кольцо $K$ финитно отделимо от подколец, содержащих $c$. Поскольку в качестве $c$ может выступать любой ненулевой элемент кольца $K$, можно сделать вывод, что в этом случае кольцо $K$ финитно отделимо.

Если $g\left({x}\right)^{2}+f\left({x}\right)^{2}\varphi \left({x}\right)$ нулевой многочлен, то $g\left({x}\right)^{2}+f\left({x}\right)^{2}\left({x^{2}-x}\right)$ --- нулевой многочлен в $Z_{2}[x]$. Тогда $xf\left({x}\right)^{2}=g\left({x}\right)^{2}+f\left({x}\right)^{2}x^{2}$. Слева стоит многочлен нечетной степени, а справа многочлен четной степени. Равенство таких многочленов невозможно в кольце $Z_{2}[x]$, поэтому рассматриваемый подслучай не имеет места.

Вывод: кольцо $K$ финитно отделимо. Что и требовалось доказать.
\end{proof}

\section{Заключение. Открытые вопросы.}

Полученные в настоящей работе результаты показывают, что вопрос об описании конечно порожденных коммутативных колец со свойством финитной отделимости сводится к описанию финитно отделимых колец простой характеристики без делителей нуля. Вопрос об описании последних связан с описанием унитарных неприводимых многочленов от двух переменных с коэффициентами из простого поля, определяющих финитно отделимые двупорожденные кольца. В этой проблематике представляют интерес следующие вопросы.

\begin{proclaim}{Вопрос 1}
Описать финитно отделимые двупорожденные кольца простой характеристики, заданные одним определяющим соотношением.
\end{proclaim}

\begin{proclaim}{Вопрос 2}
Описать финитно отделимые простые квадратичные расширения моногенных колец простой характеристики, то есть кольца вида $K=Z_{p}\left<{a,b\mid a^{2}=f\left({b}\right)}\right>$, где $p$ --- простое число, $f\left({x}\right)$ --- многочлен без свободного члена из кольца многочленов $Z_{p}[x]$.
\end{proclaim}

\end{document}